\documentclass[11pt, twoside]{article}
\usepackage{amssymb, amsmath, amsthm}
\usepackage[left=27mm, right=27mm, bottom=25mm, top=27mm]{geometry}
\usepackage{inputenc}
\usepackage{caption}
\usepackage{fancyhdr}
\usepackage{tikz}
\usetikzlibrary{positioning}
\usepackage{titling}
\usepackage{hyperref}
\predate{}
\postdate{}
\usepackage{lipsum}
\newtheorem{theorem}{Theorem}

\usepackage{titling}
\usepackage{xcolor}
\usepackage{placeins}
\usepackage{tikz}
\usepackage{verbatim}
\usepackage{titling}
\usepackage[T1]{fontenc}
\usepackage{currvita}
\usepackage{bbm}
\usepackage{enumitem}

\theoremstyle{definition}

\theoremstyle{remark}

\newcommand{\Sph}{\mathbb{S}}
\newcommand{\cU}{\mathcal{U}}
\newcommand{\R}{\mathbb{R}}
\begin{document}
\newcommand{\Addresses}{
\bigskip
\footnotesize

\medskip

\noindent David~Ellis, \textsc{School of Mathematics, University of Bristol, The Fry Building, Woodland Road, Bristol, BS8 1UG, UK.}\par\noindent\nopagebreak\textit{Email address: }\texttt{david.ellis@bristol.ac.uk}

\medskip

\noindent Maria-Romina~Ivan, \textsc{Department of Pure Mathematics and Mathematical Statistics, Centre for Mathematical Sciences, Wilberforce Road, Cambridge, CB3 0WB, UK.}\par\noindent\nopagebreak\textit{Email addresses: }\texttt{mri25@dpmms.cam.ac.uk}

\medskip

\noindent Imre~Leader, \textsc{Department of Pure Mathematics and Mathematical Statistics, Centre for Mathematical Sciences, Wilberforce Road, Cambridge, CB3 0WB, UK.}\par\noindent\nopagebreak\textit{Email address: }\texttt{i.leader@dpmms.cam.ac.uk}

\medskip

\noindent John~M.~Mackay, \textsc{School of Mathematics, University of Bristol, The Fry Building, Woodland Road, Bristol, BS8 1UG, UK.}\par\noindent\nopagebreak\textit{Email address: }\texttt{john.mackay@bristol.ac.uk}}
\pagestyle{fancy}
\fancyhf{}
\fancyhead [LE, RO] {\thepage}
\fancyhead [CE] {DAVID ELLIS, MARIA-ROMINA IVAN, IMRE LEADER, AND JOHN M. MACKAY}
\fancyhead [CO] {ANTIPODAL PATHS IN COVERS OF SPHERES}
\renewcommand{\headrulewidth}{0pt}
\renewcommand{\l}{\rule{6em}{1pt}\ }
\title{\Large{\textbf{ANTIPODAL PATHS IN COVERS OF SPHERES}}}
\author{\small{DAVID ELLIS, MARIA-ROMINA IVAN, IMRE LEADER, AND JOHN M. MACKAY}}
\date{ }
\maketitle

\begin{abstract}
In this note we show that if the sphere $\Sph^n$ is covered by $k$ open sets with $n \geq 2k-2$, then one of these sets contains a path with antipodal endpoints. This is best possible in the sense that the statement fails for $n < 2k-2$. The result can be seen as a spherical analogue of a well-known conjecture of Norine on edge-colourings of the discrete hypercube.
\end{abstract}

\section{Background}
The Lusternik--Schnirelmann theorem \cite{lusternik1930methodes}, one of the equivalent formulations of the Borsuk--Ulam theorem, states that if $\Sph^n$ is covered by $n+1$ open sets $U_0, \ldots, U_n$, then there exists an antipodal pair of points $x, -x \in \Sph^n$ lying in some $U_i$.

There are many generalisations and variants of this theorem (see e.g. \cite{Matousek-03-Borsuk-Ulam,Steinlein-85-Borsuk-survey}), but we have been unable to find the following variant stated explicitly in the literature (and, as it is a natural spherical analogue of a well-known combinatorial conjecture, as we discuss later, we thought it worth recording). 

We say a set $U \subset \Sph^n$ contains an \emph{antipodally connecting path} if there exists a path in $U$ connecting two antipodal points. We show the following.
\begin{theorem}
\label{thm:main}
Let $n \geq 2k-2$. Then, for every cover of $\Sph^n$ by $k$ open sets, one of the sets contains an antipodally connecting path. The bound $n \geq 2k-2$ is sharp.
\end{theorem}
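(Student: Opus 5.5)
The plan is to reformulate the hypothesis "no $U_i$ contains an antipodally connecting path" in terms of the double cover $\pi\colon \Sph^n \to \R P^n$, and then use a cup-length argument. Let $w \in H^1(\R P^n;\mathbb{Z}_2)$ be the generator, so that $w^n \neq 0$. Write $V_i = \pi(U_i)$. These are open sets covering $\R P^n$.

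\textbf{Reformulation.} I would check that $U$ contains no antipodally connecting path if and only if no path in $U$ projects to a loop in $\pi(U)$ with nontrivial lift. In other words, every loop of $\pi(U)$ that can be lifted \emph{inside $U$} has $\langle w,\gamma\rangle = 0$. Note the pitfall here. $U$ is not symmetric, so a component $C$ of $U$ need not have $-C$ as a component of $U$. Hence one cannot simply split $U \cup (-U)$ into $A \sqcup (-A)$.

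A first, naive consequence is that $U\cup -U$ has $\mathbb{Z}_2$-genus at most $2$, so $w^2|_{V_i}=0$. The $k$ relative classes in $H^2(\R P^n,V_i)$ then multiply to $w^{2k}=0$. This only gives a contradiction for $n\ge 2k$, which is weaker than the statement.

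\textbf{Main step.} The real content is to save two dimensions, that is, to show $w^{2k-1}=0$ under the hypothesis. The most promising route is to find, after shrinking the cover to closed sets $F_i\subset U_i$ with $\bigcup F_i = \Sph^n$, some structure that lowers the total degree by one. Two options:
\begin{itemize}
\item Show that one of the sets can be chosen so that $w|_{V_i} = 0$, i.e.\ $U_i\cap -U_i=\emptyset$ after modification. For example, reassign points of $U_1\cap -U_1$ to other sets, using the fact that the remaining sets can absorb them.
\item Prove a sharper local statement: on $V=\pi(U)$ with the no-antipodal-path property, $w|_V$ is pulled back from a map $V\to \Sph^1$ that is "half" of a class. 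Then products of such classes across the cover satisfy $w^{2k-1}=0$, by a Stiefel--Whitney/join-type computation, since $\R P^{2k-2}$ would map into a $(2k-3)$-dimensional join of circles.
\end{itemize}
I would first try the join formulation. From a partition of unity $\phi_i$ subordinate to the $U_i$, and a "phase" $\theta_i(x)\in \Sph^1$ recording which lift is taken along paths in $U_i$, I would build an equivariant map
\[
\Sph^n \to \Sph^1 * \cdots * \Sph^1 \quad (k \text{ factors}),
\]
with free antipodal action. I would then exploit the extra slack coming from the fact that the phase is constant on each component. This should let the map land in a subcomplex of dimension $2k-3$ (rather than $2k-1$). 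The Borsuk--Ulam theorem, in the form that there is no equivariant map $\Sph^n\to X$ with $\dim X<n$, then gives the contradiction for $n\ge 2k-2$. This reduction of dimension is the step I expect to be the main obstacle.

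\textbf{Sharpness.} For $n=2k-3$, I would write $\Sph^{2k-3}\subset \mathbb{C}^{k-1}$ as the join of $k-1$ circles and build the cover explicitly.
\begin{itemize}
\item For $j=1,\dots,k-1$, let $U_j$ be the set of points where $|z_j|$ is the strictly largest coordinate (up to a small open slack) and $\arg z_j$ lies in one of two short open arcs around $0$ and $\pi/2$.
\item Let the $k$-th set collect the points with $\arg z_j$ near $\pi$ or $3\pi/2$. This set must be arranged, as in the $k=2$, $n=1$ example with four arcs of the circle, so that its path components are small and none of them contains an antipodal pair.
\end{itemize}
Checking that each component of each set is antipodally disjoint, while the sets still cover, completes the plan.
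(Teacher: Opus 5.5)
Your reformulation and the naive bound are fine: the sets $\pm C$, for $C$ a component of $U$, cover $U\cup -U$ with a bipartite graph as nerve, so $w^2|_{V_i}=0$. But neither substantive part of the statement is established.

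\textbf{The case $n\ge 2k-2$.} The step you flag as the main obstacle is the whole content, and as set up it cannot be completed.

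First, there is an off-by-one slip. The identity $w^{2k-1}=0$ in $H^*(\R P^n;\mathbb{Z}_2)$ only rules out $n\ge 2k-1$, which is the weaker bound the paper attributes to Yang. For $n=2k-2$ you need $w^{2k-2}=0$, two degrees below the naive $w^{2k}$. Your first bullet (one set with $w|_{V_i}=0$) gains only one degree.

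Second, and more seriously, no argument using only the cover, a partition of unity and the free involution can reach $2k-2$. Let $N$ be the join of $k$ copies of a three-point set. This is a $(k-1)$-complex with vertices coloured $1,\dots,k$. Let $Z=\bigcup_{\sigma\cap\tau=\emptyset}\sigma\times\tau$ be its deleted product, with the swap involution. Pull back along $(a,b)\mapsto a$ the cover of $N$ by the $k$ unions of open stars of same-coloured vertices; these stars are pairwise disjoint and connected. This gives a $k$-cover of $Z$ with no antipodally connecting path in any set. Yet the deleted join of $N$ is $(\Sph^1)^{*k}\cong\Sph^{2k-1}$ with the antipodal action, and it maps equivariantly to the suspension of $Z$. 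This forces $w^{2k-2}\neq 0$ for the double cover $Z\to Z/\mathbb{Z}_2$, so $Z$ admits no equivariant map to $\Sph^{2k-3}$ or to any free $(2k-3)$-complex. Hence a cup-length bound, or your join map with phases constant on components, would reach a false conclusion on $Z$. The proof must use the sphere itself.

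The missing ingredient is the Hopf--\v{S}\v{c}epin theorem: every map from $\Sph^n$ to an $m$-dimensional compact metric space with $n\ge 2m$ has an antipodal coincidence. The boundary case $n=2m$ is a statement about maps out of $\Sph^{2m}$, not an index bound for the target. The paper takes a finite cover $\cU$ by connected components of the $U_i$. Components of one $U_i$ are disjoint, so $\cU$ has multiplicity at most $k$. A subordinate partition of unity therefore gives $F:\Sph^n\to$ a $(k-1)$-skeleton. Then $F(x)=F(-x)$ yields $f_U(x)=f_U(-x)\neq 0$, so $x$ and $-x$ lie in one connected $U$.

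\textbf{Sharpness.} Your explicit construction fails already for $k=2$. For every $k$ it fails on the circle $\{(e^{i\theta},0,\dots,0)\}$, which only $U_1$ and $U_k$ meet. Suppose the arcs there are centred at $0,\pi/2$ (for $U_1$) and at $\pi,3\pi/2$ (for $U_k$), with radii $r_0,r_1,r_2,r_3$.
\begin{itemize}
\item If $r_2>\pi/2$ or $r_3>\pi/2$, that arc already has length $>\pi$.
\item Otherwise $U_k$ misses $(0,\pi/2)$, so covering that quarter forces $r_0+r_1>\pi/2$. Then $U_1$ contains the connected arc $(-r_0,\pi/2+r_1)$ of length $>\pi$, hence an antipodally connecting path.
\end{itemize}
In the working four-arc example on $\Sph^1$, each set consists of two \emph{opposite} quarter-arcs, not adjacent ones.

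The paper avoids explicit constructions. The sharpness half of Hopf--\v{S}\v{c}epin gives $f:\Sph^{2k-3}\to Y$ with $\dim Y=k-1$ and $\epsilon=\min_x d_Y(f(x),f(-x))>0$. One then covers $Y$ by $k$ open sets whose components have diameter $<\epsilon$ and pulls back.
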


It turns out that this follows from some (fairly) classical results in topology, together with two short additional arguments. We remark that a weaker sufficient bound (of $n \geq 2k-1$) also follows quickly from \cite[Theorem 3.2]{Yang-55-Borsuk-Ulam-II}.

For some intuition on the above theorem, suppose that we can cover $S^n$ by $k$ open sets, none of which contains an antipodally connecting path. We might then hope to split each of our open sets into two open sets, neither containing an antipodal pair -- or, at least, to cover each of our open sets by two such open sets. This would yield a cover by $2k$ open sets, none containing an antipodal pair, and the Lusternik-Schnirelmann theorem would then give $2k \geq n+2$. 

This is of course not a valid argument, as there is no reason why we can cover each of our open sets by two open sets not containing an antipodal pair. For example, in $S^1$, consider the open set consisting of the whole circle with the three points of an equilateral triangle removed.

Note also that certainly there are covers of $\Sph^n$ by $n+1$ open sets without any set containing an antipodally connecting path. Indeed, as $\Sph^n$ has topological dimension $n$, one can cover $\Sph^n$ by $n+1$ open sets whose connected components have diameter $<\epsilon$ for any positive $\epsilon$. (Recall that, as shown by Ostrand, the topological dimension of a metric space $X$ is the  
minimal integer $d$ such that for every open cover $\{U_\alpha\}$ of $X$, there exist families of open sets $\mathcal{F}_i = \{V_{i,\alpha}\}$ for $1 \leq i \leq d+1$ such that $V_{i,\alpha} \subset U_{\alpha}$ for all $i$ and all $\alpha$, $\cup_{i=1}^{d+1}\mathcal{F}_i$ is a cover of $X$, and $V_{i,\alpha} \cap V_{i,\alpha'} = \emptyset$ for all $i$ and all $\alpha\neq \alpha'$. To obtain the above statement we simply apply this to the open cover $\{U_\alpha\}$ consisting of the $(\epsilon/2)$-balls, and we take the $i$th open set with all connected components of diameter $< \epsilon$ to be $\cup_{\alpha}V_{i,\alpha}$, for $1 \leq i \leq n+1$.)

\section{Proof of Theorem~\ref{thm:main}}
We say a map $f:\Sph^n \to Y$ has an {\em antipodal coincidence} if there exists $x \in \Sph^n$ such that $f(x) = f(-x)$.

As remarked above, $\Sph^n$ can be covered by $n+1$ open sets without any set containing an antipodally connecting path. To get to open covers with about half as many sets, we apply the same argument to the image of a continuous map $f:\Sph^n\to Y$ for some $(k-1)$-dimensional metric space $Y$, where $f$ has no antipodal coincidences.

The following theorem characterises for which dimensions such maps exist. The sharpness part, which comes from an explicit construction of maps without antipodal coincidences, is required to prove the sharpness part of Theorem~1.
\begin{theorem}[Hopf \cite{hopf}, \v S\v cepin \cite{Shchepin-74}]
\label{thm:covers}
Every continuous map $f:\Sph^n \to Y$ to a compact metric space of topological dimension $m$, where $n \geq 2m$, must have an antipodal coincidence. The bound $n \geq 2m$ is sharp.
\end{theorem}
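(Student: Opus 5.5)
The plan is to prove the coincidence statement by passing to the deleted square, and the sharpness statement by an explicit construction.

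\emph{Coincidence part: reduction to a polyhedron.} Suppose $f:\Sph^n\to Y$ has no antipodal coincidence, where $Y$ is compact metric of dimension $m$ and $n\ge 2m$. The continuous function $x\mapsto d(f(x),f(-x))$ is positive on the compact $\Sph^n$, so it is at least some $\delta>0$. Since $\dim Y=m$, there is a $\delta/2$-map $g:Y\to K$ to an $m$-dimensional polyhedron $K$: take the canonical map to the nerve of a finite open cover by sets of diameter $<\delta/2$ and order at most $m+1$. Then $g(y)=g(y')$ forces $d(y,y')<\delta/2$. Hence $g\circ f$ has no antipodal coincidence, and it suffices to treat the case $Y=K$, an $m$-dimensional simplicial complex.

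\emph{Coincidence part: the equivariant contradiction.} For such $K$, the map $x\mapsto (f(x),f(-x))$ is a $\mathbb{Z}_2$-equivariant map from $(\Sph^n,-)$ to the deleted product $K\times K\setminus\Delta$ with the swap action. This space equivariantly deformation retracts onto the cell complex $K^{\times 2}_\Delta$, the union of the cells $\sigma\times\tau$ with $\sigma\cap\tau=\emptyset$. That complex is free and has dimension at most $2m$.

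By itself this only yields $n\le 2m$, via the Borsuk--Ulam theorem in the form ``$\Sph^n$ does not map equivariantly to a free complex of dimension $<n$''. The real point is the borderline case $n=2m$. There I would show that the Stiefel--Whitney class $w_1^{2m}$ of the double cover $K^{\times 2}_\Delta\to K^{\times 2}_\Delta/\mathbb{Z}_2$ vanishes in $H^{2m}(K^{\times2}_\Delta/\mathbb{Z}_2;\mathbb{F}_2)$. Its pullback to $\Sph^{2m}/\mathbb{Z}_2=\mathbb{RP}^{2m}$ is $w_1^{2m}\neq 0$, which gives the contradiction.

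To prove the vanishing, I would compute $w_1^{2m}$ cellularly as an equivariant obstruction. It is the primary obstruction to an equivariant map $K^{\times 2}_\Delta\to \Sph^{2m-1}$. I expect to write down such a map explicitly on the top cells: on each pair $\{\sigma\times\tau,\tau\times\sigma\}$, use an equivariant ``difference'' map into $\Sph^{2m-1}$ built from barycentric coordinates, say by embedding $K$ in general position in $\R^{2m+1}$. One then checks that the obstruction cochain is a coboundary, by comparing the mod-2 count of top cells over each orbit with the corresponding count for a simplex. This is the step I expect to be the main obstacle, since the naive dimension count is off by exactly one. An alternative route is Hopf's original one: put a PL approximation of $f$ in general position and show that the mod-2 count of coincidence pairs is invariant under homotopy and nonzero.

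\emph{Sharpness.} For each $m$ one needs an $m$-dimensional compact $Y$ and a map $\Sph^{2m-1}\to Y$ with no antipodal coincidence. For $m=1$ the identity $\Sph^1\to\Sph^1$ works. In general I would follow \v S\v cepin: take a fine centrally symmetric triangulation of $\Sph^{2m-1}$ and map it onto an $m$-dimensional complex $Y$. Each point is sent according to its position relative to the ``lower'' and ``upper'' halves of its carrier simplex, so that the images of opposite simplices are forced to be disjoint, and a dimension count shows $\dim Y=m$. Verifying disjointness for antipodal pairs is the delicate part of this construction.
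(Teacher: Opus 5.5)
The paper does not prove this statement; it quotes it from Hopf and \v S\v cepin. On its own terms, your reduction to an $m$-dimensional polyhedron and your handling of $n>2m$ are fine. The borderline case $n=2m$, however, rests on a claim that is false.

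The class $w_1^{2m}$ of the double cover $K^{\times2}_\Delta\to K^{\times2}_\Delta/\mathbb{Z}_2$ is the mod~2 Euler class of $2m$ copies of the associated line bundle. That is exactly the mod~2 van Kampen obstruction of $K$, and it is often nonzero. Take $K=K_5$, so $m=1$. Then $K^{\times2}_\Delta/\mathbb{Z}_2$ is a closed surface, and $w_1^2$ evaluated on its mod~2 fundamental cycle is the parity of the number of crossings between disjoint edges in any general-position drawing of $K_5$ in the plane. That parity is odd (five crossings for the convex pentagon). For $K=\Delta^{(m)}_{2m+2}$ the class is nonzero by van Kampen--Flores.

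Two consequences follow. First, the equivariant map $K^{\times2}_\Delta\to\Sph^{2m-1}$ you hope to write down does not exist in general; general position in $\R^{2m+1}$ only gives a map to $\Sph^{2m}$. Second, no argument that only bounds the $w_1$-height of $K^{\times2}_\Delta$ can settle $n=2m$. For $K_5$ that height is $2$, yet $\Sph^2$ admits no equivariant map into $(K_5)^{\times2}_\Delta$: every equivariant map into it has the form $(g,g\circ a)$, so the theorem applies. Your fallback is also ill-posed. For a general-position map $\Sph^{2m}\to K$ the antipodal coincidence set is $m$-dimensional, so there is no finite count of coincidence pairs.

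The missing idea is to work with the fundamental class inside the ambient product $K\times K$, where $H_{2m}(K;\mathbb{F}_2)=0$ because $1\le m<2m$. Let $a$ be the antipodal map, $T$ the swap, $r$ the equivariant retraction onto $X=K^{\times2}_\Delta$, and set $F=r\circ(f,f\circ a):\Sph^{2m}\to X$ with quotient $\bar F:\mathbb{RP}^{2m}\to X/\mathbb{Z}_2$. The argument then runs as follows.
\begin{enumerate}
\item $\bar F^*w_1$ generates $H^1(\mathbb{RP}^{2m};\mathbb{F}_2)$ and its $2m$-th power is nonzero, so $\bar F_*[\mathbb{RP}^{2m}]\neq0$.
\item Since $X$ has dimension $2m$, top homology equals cellular cycles. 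The transfer $\bar e\mapsto e+Te$ is injective on top chains, so $F_*[\Sph^{2m}]=\operatorname{tr}\bar F_*[\mathbb{RP}^{2m}]\neq0$ in $H_{2m}(X;\mathbb{F}_2)$.
\item The map $H_{2m}(X;\mathbb{F}_2)\to H_{2m}(K\times K;\mathbb{F}_2)$ is injective, because $X$ is a subcomplex of the same dimension.
\item In $K\times K$, the map $F$ is homotopic to $(f\times f)\circ(\mathrm{id},a)$. This sends $[\Sph^{2m}]$ to $f_*[\Sph^{2m}]\times[\mathrm{pt}]+[\mathrm{pt}]\times f_*[\Sph^{2m}]=0$, contradicting steps 2 and 3.
\end{enumerate}
The case $n>2m$ also follows by restricting to an equatorial $\Sph^{2m}$.

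Finally, your sharpness paragraph only describes \v S\v cepin's construction. The antipodal disjointness you postpone is its whole content, so that half remains a citation, which is also all the paper provides.
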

See \cite[Theorem 1]{Volov-Shch-05-antipodal} and the discussion and references therein for the somewhat complicated history of the (re)discovery of this and related results.

\begin{proof}
    [Proof of Theorem~\ref{thm:main}]
    Suppose $n \geq 2k-2$, and $\Sph^n$ is covered by $k$ open sets.  Consider the cover of $\Sph^n$ by the connected components of this cover, and take a finite subcover $\cU$; observe that this subcover has multiplicity at most $k$.
    Choose a continuous partition of unity $\{f_U\}_{U \in \cU}$ subordinate to this cover $\cU$, i.e., each $f_U:\Sph^n \to \mathbb{R}_{\geq 0}$ is a continuous, non-negative real-valued function with support $U$, and for all $x \in \Sph^n$ we have $\sum_{U \in \cU} f_U(x)=1$.

    Consider the continuous map $F:\Sph^n \to \Delta \subset \R^{\cU}, F(x) = (f_U(x))_{U \in \cU}$, where $\Delta$ is the standard simplex in $\R^{\cU}$. Since $\cU$ has multiplicity at most $k$, $F$ maps into the $(k-1)$-skeleton of $\Delta$, which has topological dimension $k-1$. Thus by Theorem~\ref{thm:covers} there exists $x \in \Sph^n$ with $F(x)=F(-x)$. Thus for some $U \in \cU$ we have $f_U(x)=f_U(-x)\neq 0$, so $x$ and $-x$ are both in the same open connected set $U$.

    Conversely, suppose $n < 2k-2$.
    Then by Theorem~\ref{thm:covers} there exists a compact metric space $Y$ of topological dimension $k-1$ and a continuous map $f:\Sph^n \to Y$ so that for all $x \in \Sph^n$, $f(x)\neq f(-x)$.  Let $\epsilon = \min_{x \in \Sph^n} d_Y(f(x),f(-x))$.  As $\Sph^n$ is compact, we have $\epsilon >0$.  As $Y$ has topological dimension at most $k-1$, there exists an open cover $V_1, \ldots, V_k$ of $Y$ such that every connected component of each $V_i$ has diameter less than $\epsilon$. Let $U_i = f^{-1}(V_i)$ for each $i=1,\ldots, k$. Then clearly $U_1, \ldots, U_k$ is an open cover of $\Sph^n$ with no antipodally connecting path in any $U_i$.
\end{proof}
We remark that we were inspired to consider this problem as it is a spherical analogue of a now notorious conjecture in combinatorics (originally due to Norine \cite{norine} and recast in a slightly different form by Feder and Subi \cite{feder}) that in any two-colouring of the edges\footnote{As usual, the {\em edges} of the $n$-dimensional discrete hypercube are the $n2^{n-1}$ line segments between vertices that differ in exactly one coordinate.} of the $n$-dimensional discrete hypercube $\{-1,1\}^n$, there exists a path between two antipodal vertices with at most one colour-change. This problem remains wide open, though recently Hollom \cite{hollom} proved that there is always a geodesic (no direction used twice), and therefore a path, between two antipodal vertices with at most $O(\sqrt{n})$ colour-changes. Until Hollom's result, all known upper bounds were linear in $n$.
\vspace{-1em}
\bibliographystyle{amsplain}
\bibliography{bibliography}
\vspace{-1em}
\Addresses
\end{document}